\newtheorem{theorem}{Theorem}
\newtheorem{proposition}[theorem]{Proposition}
\newtheorem{lemma}[theorem]{Lemma}
\theoremstyle{definition}
\newtheorem{definition}[theorem]{Definition}
\theoremstyle{remark}
\newtheorem{remark}[theorem]{Remark}
\newcommand{\R}{\mathbb{R}}
\newcommand{\N}{\mathbb{N}}
\newcommand{\F}{\mathcal{F}}
\begin{document}
%
\title{Fractional Discrete Systems with Sequential $h$-differences}

\author{ Ewa Girejko, Dorota Mozyrska, Ma{\l}gorzata Wyrwas}
\date{Faculty of Computer Science\\
        Department of Mathematics\\
        Bia{\l}ystok University of Technology \\
        Wiejska 45A,
        15-351 Bia\l ystok, Poland\\[2mm]
        \texttt{$\{$e.girejko, d.mozyrska, m.wyrwas$\}$@pb.edu.pl}}
\maketitle

\begin{abstract}
In the paper we study the subject of positivity of systems with sequential fractional difference. We give formulas for the unique solutions to systems in linear and semi-linear cases. The positivity of systems is considered.
\end{abstract}

\section{Introduction}
The first definition of the fractional derivative was introduced by Liouville and Riemann at the end of the 19-th century. Later on, in the late 1960s, this idea was used by engineers for modeling various processes. Thus the fractional calculus started to be exploited since that time.  This calculus is a field of mathematics that grows out of the traditional definitions of calculus integral and derivative operators and deals with fractional derivatives and integrals of any order. Fractional difference calculus have been investigated by many authors, for example, \cite{BalAbd,AticiEloe,atici1,ChenLou,Holm,GirMoz,Kaczorek2008,MilRos,DM_EP12,ORT,Pod} and others. The subject of positivity is well developed for fractional linear systems with continuous time, see \cite{Kaczorek1,Kaczorek2008,Kaczorek2009}. However, positivity of fractional discrete systems with sequential $h$-differences is still a field to be examined. In the present paper we open our studies in this field. We give formulas for the unique solutions to systems in linear and semi-linear cases. Moreover, the 
positivity of systems is considered.\\
The paper is organized as follows. In Section \ref{sec:2} all preliminaries definitions, facts and notations are gathered. Section \ref{sec:3} presents systems with sequential fractional differences with results on uniqueness of solutions. Semilinear systems we included in Section \ref{sec4}. The last Section concerns positivity of considered systems.

\section{Preliminaries}\label{sec:2}

Let us denote by $\mathcal{F}_D$ the set of real valued functions defined on $D$.
Let $h>0, \alpha>0$ and put $(h \mathbb{N})_a:=\{a,a+h,a+2h,...\}$ for $h>0$ and $a\in\R$. Let
\begin{equation*} R^n_\geq=\{x\in\mathbb{R}^n: x_i\geq 0, 1\leq i\leq n\}.\end{equation*}
Due to   notations from the time scale theory  the operator $\sigma:(h \mathbb{N})_a\to (h \mathbb{N})_a$ is defined by $\sigma(t):=t+h$. 
The next definitions of $h$-difference operator was originally given in \cite{BasFerTor}, but here we propose simpler notation.
\begin{definition}\label{def:1}
For a function $x\in\F_{(h\mathbb{N})_a}$ the {\em forward $h$-difference operator} is defined as
\begin{equation*}
(\Delta_h x)(t):=\frac{x(\sigma(t))-x(t)}{h},\ \ \,t=a+nh, \, n\in \N_0\,,
\end{equation*}
while the {\em $h$-difference sum} is given by
\begin{equation*}
\left(_a\Delta^{-1}_hx\right)(t):=h\sum^{n}_{k=0}x(a+kh)\,,
\end{equation*}
where $t=a+(n+1)h$, $n\in\N_0$ and $\left(_a\Delta^{-1}_hx\right)(a)=0$.
\end{definition}

\begin{definition}\cite{BasFerTor}\label{def:2}
For arbitrary $t,\alpha\in\R$ the {\em $h$-factorial function} is defined by
\begin{equation}\label{def:h-factorial_function}
t^{(\alpha)}_h:=h^\alpha\frac{\Gamma(\frac th+1)}{\Gamma(\frac th+1-\alpha)}\,,
\end{equation}
where $\Gamma$ is the Euler gamma function, $\frac{t}{h}\not\in \mathbb{Z}_{-}:=\{-1,-2,-3,\ldots\}$, and we use the convention that division at a pole yields zero.
\end{definition}
Notice that if we use the general binomial coefficient $\binom{a}{b}:=\frac{\Gamma(a+1)}{\Gamma(b+1)\Gamma(a-b+1)}$, then \eqref{def:h-factorial_function} can be rewritten as
\[t^{(\alpha)}_h=h^\alpha\Gamma(\alpha+1)\binom{\frac{t}{h}}{\alpha}\,.\]

The next definition with another notations was stated in \cite{BasFerTor}. Here we use more suitable summations.
\begin{definition}\label{sum}
For a function $x\in\F_{(h\mathbb{N})_a}$ the \emph{fractional $h$-sum of order $\alpha>0$} is given by
\begin{equation*}
\left(_a\Delta^{-\alpha}_hx\right)(t):=\frac{h}{\Gamma(\alpha)} \sum^{n}_{k=0}(t-\sigma(a+kh))^{(\alpha-1)}_hx(a+kh)\,,
\end{equation*}
where $t=a+(\alpha+n)h$, $n\in \mathbb{N}_0$. 
Moreover we define $\left(_a\Delta^0_hx\right)(t):=x(t)$.
\end{definition}
\begin{remark}
Note that $_a\Delta^{-\alpha}_h: \F_{(h\mathbb{N})_a}\rightarrow \F_{(h\mathbb{N})_{a+\alpha h}}$.
\end{remark}
Accordingly to the definition of $h$-factorial function the formula given in Definition \ref{sum} can be rewritten as:
\[\begin{split}
          \left(_a\Delta^{-\alpha}_h
x
\right)(t) & = h^{\alpha}\sum^{n}_{k=0}\frac{\Gamma(\alpha+n-k)}{\Gamma(\alpha)\Gamma(n-k+1)}
x
(a+kh) \\
            & =
h^{\alpha}\sum^{n}_{k=0}\binom{n-k+\alpha-1}{n-k}
x
(a+kh)\,
\end{split}
\]
for $t=a+(\alpha+n)h$, $n\in \mathbb{N}_0$.

\begin{remark}
In \cite{Holm} one can find the following form of the fractional $h$-sum of order $\alpha>0$:
\begin{equation*}\label{sum_operator:holm}
\left(_a\Delta^{-\alpha}_hx\right)(t)=\frac{h^\alpha}{\Gamma(\alpha)} \sum^{t-\alpha h}_{k=a}\left(\frac{t-\sigma(k)}{h}\right)^{(\alpha-1)}x(k)
\end{equation*}
that can be useful in implementation.
\end{remark}

The following definition one can find in \cite{Abd} for $h=1$.

\begin{definition}\label{DEF:Caputo:h-difference:operator}
Let $\alpha\in(0,1]$. The \emph{Caputo $h$-difference operator} $_a\Delta_{h,*}^{\alpha}x$ of order $\alpha$ for a function $x\in\F_{(h\mathbb{N})_a}$ is defined by
\begin{equation*}\label{operator:C}
\left(_a\Delta_{h,*}^{\alpha}x\right)(t):=\left(_a\Delta^{-(1-\alpha)}_h\left({\Delta}_hx\right)\right)(t),\ \ \,
t\in (h\mathbb{N})_{a+(1-\alpha) h}\,.
\end{equation*}
\end{definition}
\begin{remark}
Note that: $_a\Delta_{h,*}^{\alpha}: \F_{(h\mathbb{N})_a}\rightarrow \F_{(h\mathbb{N})_{a+(1-\alpha) h}}$, where $\alpha\in(0,1]$.
\end{remark}

We need the power rule formulas in the sequel.
Firstly, we easily notice that for $p\neq 0$
the well defined $h$-factorial functions have the following property:
\begin{equation*}\label{pochodna}
\Delta_h(t-a)_h^{(p)}=p(t-a)_h^{(p-1)}\,.
\end{equation*}
More properties of $h$-factorial functions can be found in \cite{GirMozCoimbra}.
In our consideration the crucial role plays the power rule formula presented in \cite{FerTor}, i.e.
\begin{equation}\label{power:rule:FerTor}
\left(_a\Delta_h^{-\alpha}\psi\right)(t)=\frac{\Gamma(\mu+1)}{\Gamma(\mu+\alpha+1)}\left(t-a+\mu h \right)_h^{(\mu+\alpha)}\,,
\end{equation}
where $\psi(r)=(r-a+\mu h)_h^{(\mu)}$, $r\in (h\N)_a$, $t\in (h\N)_{a+\alpha h}$.
Note that using  the general binomial coefficient one can write \eqref{power:rule:FerTor} as
\[\left(_a\Delta_h^{-\alpha}\psi\right)(t)=\Gamma(\mu+1)\binom{n+\alpha+\mu}{n}h^{\mu+\alpha}\,.\]
Then if $\psi\equiv 1$, then we have for
$\mu=0$,
$a=(1-\alpha)h$ and $t=nh+a+\alpha h$
\begin{equation*}\begin{split}
\left(_{a}\Delta^{-\alpha}_h 1 \right) (t) &  =\frac{1}{\Gamma(\alpha+1)}(t-a)_h^{(\alpha)}
\\ & = \frac{\Gamma(n+\alpha+1)}{\Gamma(\alpha+1)\Gamma(n+1)}h^{\alpha}  =\binom{n+\alpha}{n}h^\alpha\,.
\end{split}
\end{equation*}

Let us define special functions, that we use in the next  section to write the formula for solutions.
\begin{definition}\label{fi_def}
For $\alpha, \beta >0$ we define
\begin{equation}\label{fi_def1}
\varphi_{k,s}(nh):=\left\{\begin{array}{ll}\binom{n- k+k\alpha+s\beta}{n-k} h^{k\alpha +s\beta}, & \mbox{for} \ n\in\N_{k}\\
0, & \mbox{for} \ n<k\end{array}\right.
\end{equation}
\end{definition}
and
\begin{equation}\label{fi_def2}
\widetilde{\varphi}_{k,s}(nh):=\left\{\begin{array}{ll}
\binom{n+\mu-1}{n}h^{\mu}=\frac{\Gamma(n+\mu)}{\Gamma(\mu)\Gamma(n+1)}h^{\mu}\,, & \mbox{for} \ n\in\N_0\\
0\,, & \mbox{for} \ n<0\end{array}\right.\,,
\end{equation}
where $n,k,s\in\mathbb{N}_0$ and $\mu=k\alpha+s\beta$.

\begin{remark}\label{rem}
It is worthy to notice that:
 \begin{itemize}
 \item[(a)] $\varphi_{0,0}(nh)=1$;
  \item[(b)] $\varphi_{1,0}(nh)=\binom{n+\alpha-1}{n-1}h^{\alpha} = \left(_0\Delta_h^{-\alpha}1\right)\left((n-1)h+\alpha h\right)$;
   \item[(c)] $\varphi_{k,s}(nh)=\frac{\Gamma(n-k+1+k\alpha+s\beta)}{\Gamma(k\alpha+s\beta+1)\Gamma(n-k+1)}$ and as the division by pole gives zero, the formula works also for $n<k, n\in\N$;
 \item[(d)]  $\varphi_{k,s}(nh)=\frac{1}{\Gamma(k\alpha+s\beta+1)}\left((n-k)h+k\alpha h+s\beta h\right)^{(k\alpha+s\beta)}_h$.
 \end{itemize}
\end{remark}

We also need the property presented in the following proposition.

\begin{proposition}\label{fi}
Let $\alpha, \beta \in (0,1], h>0$ and $a=(\alpha-1)h, b=(\beta-1)h$.
Then
\begin{equation}\label{powerrule}
\left(_0\Delta^{-\alpha}_h \varphi_{k,s}\right)(nh+a)=\varphi_{k+1,s}(nh)
\end{equation}
and
\begin{equation}\label{powerrule_f}
\left(_0\Delta^{-\beta}_h \varphi_{k,s}\right)(nh+b)=\varphi_{k,s+1}(nh)\,.
\end{equation}
\end{proposition}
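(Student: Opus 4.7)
The plan is to reduce both identities to the power rule \eqref{power:rule:FerTor}, after rewriting $\varphi_{k,s}$ in $h$-factorial form and adjusting the lower limit of the fractional sum. By symmetry, once the first identity is established, the second follows by interchanging the roles of $\alpha$ and $\beta$ (and of $k$ and $s$), so I would focus on \eqref{powerrule}.

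First, I would invoke Remark \ref{rem}(d) to write
\[
\varphi_{k,s}(mh)=\frac{1}{\Gamma(\mu+1)}\bigl((m-k)h+\mu h\bigr)_h^{(\mu)},\qquad \mu:=k\alpha+s\beta,
\]
so the target identity says
\[
\bigl(_{0}\Delta^{-\alpha}_h\varphi_{k,s}\bigr)(nh+a)=\frac{1}{\Gamma(\mu+\alpha+1)}\bigl((n-k-1)h+(\mu+\alpha)h\bigr)_h^{(\mu+\alpha)}.
\]
Next, I would expand the left-hand side via Definition \ref{sum} with $t=nh+(\alpha-1)h=\bigl(\alpha+(n-1)\bigr)h$, yielding a finite sum from $j=0$ to $n-1$. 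The point is that by \eqref{fi_def1} we have $\varphi_{k,s}(jh)=0$ for $j<k$, so without changing anything one may start the summation from $j=k$.

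Then I would change the summation index to $i=j-k$, which has the effect of converting the truncated sum into $\bigl(_{kh}\Delta^{-\alpha}_h \psi\bigr)(t)$, where $\psi(r):=(r-kh+\mu h)_h^{(\mu)}$ is precisely the function to which the power rule \eqref{power:rule:FerTor} applies (with the role of $a$ played by $kh$). A direct application of \eqref{power:rule:FerTor} then gives
\[
\bigl(_{kh}\Delta^{-\alpha}_h\psi\bigr)(t)=\frac{\Gamma(\mu+1)}{\Gamma(\mu+\alpha+1)}(t-kh+\mu h)_h^{(\mu+\alpha)}.
\]
Dividing by the prefactor $\Gamma(\mu+1)$ that comes from $\psi=\Gamma(\mu+1)\varphi_{k,s}$, substituting $t=nh+a$, and simplifying $t-kh+\mu h=(n-k-1)h+(\mu+\alpha)h$, one recognizes the right-hand side as $\varphi_{k+1,s}(nh)$ via Remark \ref{rem}(d), since $(k+1)\alpha+s\beta=\mu+\alpha$. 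The formula \eqref{powerrule_f} follows along the same lines after noting that $\varphi_{k,s}(jh)$ vanishes for $j<k$ (in particular for $j<s$ is not needed — only the vanishing encoded in \eqref{fi_def1} matters), so that $_0\Delta^{-\beta}_h$ can again be realized as $_{kh}\Delta^{-\beta}_h$ acting on a function of the type $\psi$ above, and \eqref{power:rule:FerTor} applies with $\alpha$ replaced by $\beta$.

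The routine calculations are straightforward; the only point requiring care is the bookkeeping of the initial points of the operators and of the arguments of the $h$-factorial function. Writing $a=(\alpha-1)h$ and keeping track of the shift $kh$ is the main obstacle, but once the vanishing of $\varphi_{k,s}$ for $n<k$ is used to realign $_0\Delta^{-\alpha}_h$ with $_{kh}\Delta^{-\alpha}_h$, the result drops out of the existing power rule \eqref{power:rule:FerTor}.
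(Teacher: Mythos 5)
Your proposal is correct and follows essentially the same route as the paper: both rewrite $\varphi_{k,s}$ via Remark \ref{rem}(d) as $\frac{1}{\Gamma(\mu+1)}$ times a shifted $h$-factorial function, use the vanishing of $\varphi_{k,s}$ below $k$ to replace $_0\Delta^{-\alpha}_h$ by $_{kh}\Delta^{-\alpha}_h$, apply the power rule \eqref{power:rule:FerTor}, and identify the result as $\varphi_{k+1,s}(nh)$, with \eqref{powerrule_f} obtained by symmetry. No substantive differences.
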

\begin{proof}
We show only equality (\ref{powerrule}), as 
\eqref{powerrule_f} is a symmetric one.\\
Let $\mu:=k\alpha+s\beta$. For $r\in (h\N)_{kh}$ we define the following $h$-factorial function
$\psi(r):=(r+\mu h)_h^{(\mu)}$.
Since
\[
\begin{split}
  \varphi_{k,s}(nh) & = \frac{1}{\Gamma(k\alpha +s\beta +1)}\left((n-k)h+k\alpha h + s\beta h\right)_h^{(k\alpha+s\beta)}\\
    & =
\frac{1}{\Gamma(\mu +1)}\psi(nh-kh)\,
\end{split}
\]
for $n\geq k$ and $\varphi_{k,s}(mh)=0$ for $m<k$, by \eqref{power:rule:FerTor}
we get
\begin{eqnarray*}\begin{split}
 \left(_0\Delta_h^{-\alpha}\varphi_{k,s}\right)(t) = \left(_{kh}\Delta_h^{-\alpha}\varphi_{k,s}\right)(t)
 & =\frac{1}{\Gamma(\mu +1)}\left(_0\Delta_h^{-\alpha}\psi\right)(t) \\ & =
 \frac{1}{\Gamma(\mu+1)}\frac{\Gamma(\mu+1)}{\Gamma(\mu+\alpha+1)}\left(t+\mu h \right)_h^{(\mu+\alpha)}\\
 &=\frac{1}{\Gamma(\mu+\alpha+1)}\left(t+\mu h \right)_h^{(\mu+\alpha)}\,,
 \end{split}
\end{eqnarray*}
where 
$t=a-kh+nh$.
Hence
\[t+\mu h=nh-(k+1)h +(k+1)\alpha h +s\beta h\]
and
\begin{eqnarray*}\begin{split}
 \left(_0\Delta_h^{-\alpha}\varphi_{k,s}\right)(nh+a) & =\frac{1}{\Gamma(\mu+\alpha+1)}\left(nh+a+\mu h \right)_h^{(\mu+\alpha)}\\& =
 \frac{\Gamma(\alpha +n-(k+1)+\mu +1)}{\Gamma(\mu+\alpha+1)\Gamma(n-(k+1)+1)}h^{\mu+\alpha}\\
 &=
 \frac{\Gamma(\alpha +n-k+\mu)}{\Gamma(\mu+\alpha+1)\Gamma(n-k)}h^{\mu+\alpha}\\
 &=
 \binom{n-k-1+\mu+\alpha}{n-k-1}h^{\mu+\alpha}\\
 &=\binom{n-(k+1)+(k+1)\alpha+s\beta}{n-(k+1)}h^{(k+1)\alpha+s\beta}\\
 &
 =\varphi_{k+1,s}(nh).
\end{split}
\end{eqnarray*}
\end{proof}

From the application of the power rule  follows the rule for composing two fractional $h$-sums. The proof for the case $h=1$ one can find in \cite{Holm}. For any positive $h>0$ we presented the prove in \cite{GirMozCoimbra}.

\begin{proposition}\label{sumy}
Let $x$ be a real valued function defined on $\left(h\N\right)_a$, where $a, h\in\R, h>0$. For $\alpha, \beta>0$ the following equalities hold:
\begin{equation*}\label{eq:sumy}
\begin{split}
\left( _{a+\beta h}\Delta_h^{-\alpha}\left( _{a}\Delta_h^{-\beta}x\right)\right)(t) & =\left( _{a}\Delta_h^{-\left(\alpha+\beta\right)}x\right)(t)\\ & =\left( _{a+\alpha h}\Delta_h^{-\beta}\left( _{a}\Delta_h^{-\alpha}x\right)\right)(t)\,,
\end{split}
\end{equation*}
where $t\in\left(h\N\right)_{a+\left(\alpha+\beta\right)h}$.
\end{proposition}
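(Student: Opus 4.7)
The plan is to unfold both sides by the explicit summation formula that follows Definition \ref{sum}, namely
\[
\left(_a\Delta_h^{-\gamma}x\right)(t)=h^{\gamma}\sum_{k=0}^{n}\binom{n-k+\gamma-1}{n-k}x(a+kh)\,,\qquad t=a+(\gamma+n)h\,,
\]
then swap the order of summation in the resulting double sum, and finally recognize that what remains is a Chu--Vandermonde convolution of generalized binomials.

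Writing $t=a+(\alpha+\beta+n)h$, I first expand the inner operator $_a\Delta_h^{-\beta}x$ evaluated at the point $a+\beta h+jh$ (so its $m$-index equals $j$), and then apply the outer $_{a+\beta h}\Delta_h^{-\alpha}$, which yields
\[
\left( _{a+\beta h}\Delta_h^{-\alpha}\left( _{a}\Delta_h^{-\beta}x\right)\right)(t)
=h^{\alpha+\beta}\sum_{j=0}^{n}\binom{n-j+\alpha-1}{n-j}\sum_{k=0}^{j}\binom{j-k+\beta-1}{j-k}x(a+kh)\,.
\]
Interchanging the order of summation (so $k$ becomes the outer index, running from $0$ to $n$, while $j$ runs from $k$ to $n$) and substituting $i=j-k$, $N=n-k$, the coefficient of $x(a+kh)$ reduces to
\[
h^{\alpha+\beta}\sum_{i=0}^{N}\binom{N-i+\alpha-1}{N-i}\binom{i+\beta-1}{i}\,.
\]

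At this step I would invoke the Chu--Vandermonde identity in its form for real parameters,
\[
\sum_{i=0}^{N}\binom{N-i+\alpha-1}{N-i}\binom{i+\beta-1}{i}=\binom{N+\alpha+\beta-1}{N}\,,
\]
which is the convolution identity obtained by comparing coefficients in the generating function identity $(1-z)^{-\alpha}(1-z)^{-\beta}=(1-z)^{-(\alpha+\beta)}$. Plugging this back in gives exactly
\[
h^{\alpha+\beta}\sum_{k=0}^{n}\binom{n-k+\alpha+\beta-1}{n-k}x(a+kh)=\left( _{a}\Delta_h^{-(\alpha+\beta)}x\right)(t)\,,
\]
proving the first equality. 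The second equality is obtained by the same computation with the roles of $\alpha$ and $\beta$ interchanged, the symmetry being manifest in the commutativity of the Chu--Vandermonde sum in $\alpha$ and $\beta$.

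The main obstacle is the Chu--Vandermonde step, since the classical statement is usually quoted for integer parameters; however, the generating-function derivation works verbatim for arbitrary $\alpha,\beta>0$, so no additional analytic issue arises. An alternative, more structural route would be to use Proposition \ref{fi}: expand $x$ (formally) against the basis $\{\varphi_{k,s}\}$ and iterate the power rule \eqref{powerrule}--\eqref{powerrule_f} to avoid the combinatorial identity entirely, but the direct Fubini-plus-Vandermonde argument sketched above is the shortest and exactly parallels the $h=1$ proof in \cite{Holm} and the general-$h$ proof in \cite{GirMozCoimbra}.
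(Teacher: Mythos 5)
Your argument is correct and complete. Note that the paper does not actually prove Proposition \ref{sumy}: it only remarks that the rule ``follows from the application of the power rule'' and defers to \cite{Holm} (for $h=1$) and \cite{GirMozCoimbra} (for general $h>0$), so there is no in-text proof to compare against. Your route --- unfold both operators via the expansion $\left(_a\Delta_h^{-\gamma}x\right)(t)=h^{\gamma}\sum_{k=0}^{n}\binom{n-k+\gamma-1}{n-k}x(a+kh)$, exchange the finite double sum, and collapse the coefficient of $x(a+kh)$ by the Chu--Vandermonde convolution --- is sound; the index bookkeeping (inner index $j$ at the node $a+(\beta+j)h$, outer sum over $j=0,\dots,n$ with $t=a+(\alpha+\beta+n)h$) is right, and your justification of the Vandermonde identity for real $\alpha,\beta$ via $(1-z)^{-\alpha}(1-z)^{-\beta}=(1-z)^{-(\alpha+\beta)}$ (equivalently, a polynomial identity in $\alpha,\beta$ verified on the integers) is legitimate. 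The route the paper gestures at is to apply the outer fractional sum to the kernel $r\mapsto(t-\sigma(a+kh))_h^{(\beta-1)}$ of the inner sum and invoke the power rule \eqref{power:rule:FerTor}; this is the same computation in different clothing, since the power rule is precisely the Vandermonde convolution expressed through $h$-factorial functions. Your version is more self-contained because it does not presuppose \eqref{power:rule:FerTor}; the power-rule version is shorter if that formula is taken as given.
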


The next proposition gives a useful identity of transforming Caputo fractional difference equations into fractional summations for the case
when an order is from the interval $(0,1]$.

\begin{proposition}\label{lemat12}\cite{GirMozCoimbra}
Let $\alpha\in(0,1]$, $h>0$, $a=(\alpha-1)h$ and $x$ be a real valued function defined on $\left(h\N\right)_a$.
 The following formula holds
\begin{equation*}
\left(_0\Delta_{h}^{-\alpha} \left(_{a}\Delta^\alpha_{h,*}x\right)\right)(nh+a)=x(nh+a)- x(a),\ \ \, n\in\N_1\,.
\end{equation*}
\end{proposition}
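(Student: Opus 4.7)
The plan is to unfold the Caputo operator by its definition, then use the composition rule for fractional $h$-sums (Proposition \ref{sumy}) to collapse two consecutive sums of orders $\alpha$ and $1-\alpha$ into a single sum of order $1$, and finally recognize the resulting expression as a telescoping sum.

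In detail, I would start by writing
\[
\bigl(_a\Delta_{h,*}^{\alpha}x\bigr)(t)=\bigl(_a\Delta_h^{-(1-\alpha)}(\Delta_h x)\bigr)(t)
\]
straight from Definition \ref{DEF:Caputo:h-difference:operator}; this object lives on $(h\N)_{a+(1-\alpha)h}=(h\N)_0$ precisely because $a=(\alpha-1)h$. That is the key domain observation: the shift $a+(1-\alpha)h$ produced by the inner sum exactly matches the lower index $0$ of the outer sum we are about to apply, so Proposition \ref{sumy} with $\beta=1-\alpha$ applies cleanly and gives
\[
\bigl(_0\Delta_h^{-\alpha}\bigl(_a\Delta_h^{-(1-\alpha)}(\Delta_h x)\bigr)\bigr)(t)=\bigl(_a\Delta_h^{-1}(\Delta_h x)\bigr)(t)
\]
for $t\in(h\N)_{a+h}$.

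Next I would specialize Definition \ref{sum} at $\alpha=1$: since $s^{(0)}_h=1$, the fractional sum reduces to the ordinary Riemann-type sum, so for $t=a+(n+1-1)h+h=a+nh$, $n\geq 1$ (written as $t=a+(1+(n-1))h$ in the definition),
\[
\bigl(_a\Delta_h^{-1}(\Delta_h x)\bigr)(a+nh)=h\sum_{k=0}^{n-1}(\Delta_h x)(a+kh)=\sum_{k=0}^{n-1}\bigl(x(a+(k+1)h)-x(a+kh)\bigr).
\]
The sum telescopes to $x(a+nh)-x(a)$, which is exactly the claimed identity after substituting $t=nh+a$.

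The only subtle point, and really the main thing to verify, is the bookkeeping of the base points: one must check that $a=(\alpha-1)h$ makes the outer sum start at $0$, that evaluating at $nh+a$ with $n\geq 1$ lies in the domain $(h\N)_{a+h}$ of $_a\Delta_h^{-1}(\Delta_h x)$, and that the composition identity of Proposition \ref{sumy} is applied with the correct shift $\beta=1-\alpha$. Once those alignments are in place the computation is essentially immediate, and the restriction $n\in\N_1$ in the statement is exactly what is needed for the telescoping sum to be nonempty.
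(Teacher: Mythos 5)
Your argument is correct: with $a=(\alpha-1)h$ the Caputo operator unfolds to $_a\Delta_h^{-(1-\alpha)}(\Delta_h x)$ based at $0$, Proposition \ref{sumy} (with $\beta=1-\alpha$) collapses the composition to $_a\Delta_h^{-1}(\Delta_h x)$, and the order-one sum telescopes; the base-point bookkeeping you flag is indeed the only delicate part and you handle it correctly. Note that the paper itself gives no proof of this proposition --- it is imported by citation from \cite{GirMozCoimbra} --- so there is nothing in the text to compare against, but your route is the standard one; the only point worth adding is a one-line remark that the boundary case $\alpha=1$ (where $1-\alpha=0$ falls outside the hypothesis $\beta>0$ of Proposition \ref{sumy}) is handled trivially by the convention $\left(_a\Delta_h^{0}x\right)(t)=x(t)$.
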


\section{Systems with sequential  fractional differences}\label{sec:3}
Let $\alpha, \beta\in (0,1]$ and $x:\left(h\N\right)_a\rightarrow \R^n$. Moreover, let us take
$a=(\alpha-1)h$ and $b=(\beta-1)h$.
Then we define
\begin{equation*}
y(nh+b):=\left(_a\Delta^{\alpha}_{h,*}x\right)(nh)\,.
\end{equation*}
Note that $y:\left(h\N\right)_b\rightarrow \R^n$.
Then we apply the next difference operator of order $\beta$ on the new function $y$ and consider here an initial value problem stated by the system:
\begin{subequations}\label{uklad:ogolny}
\begin{eqnarray}
  \left(_a\Delta^{\alpha}_{h,*}x\right)(nh) &=&  y(nh+b)\,,  \\
   \left(_b\Delta^{\beta}_{h,*}y\right)(nh) & =& f(nh, x(nh+a))
\end{eqnarray}
\end{subequations}
with initial values:
\begin{subequations}\label{warunki}
\begin{eqnarray}
  \left(_a\Delta^{\alpha}_{h,*}x\right)(0)  & =& x_0\,,\\
   x(a) & =& x_a\,,
\end{eqnarray}
\end{subequations}
where $x_a, x_0$ are constant vectors from $\R^n$.
We use Proposition \ref{lemat12} twice and, for $n\geq1$, we get:
\begin{equation*}\begin{split}
\left(_0\Delta_h^{-\beta}\left( _b\Delta^{\beta}_{h,*}y\right)\right)(nh+b) & =y(nh+b)-y(b) \\ & =\left( _a\Delta^{\alpha}_{h,*}x\right)(nh)-x_0
\end{split}
\end{equation*}
and
\begin{equation*}
\left(_0\Delta_h^{-\alpha}\left( _a\Delta^{\alpha}_{h,*}x\right)\right)(nh+a)=x(nh+a)-x_a\,.
\end{equation*}
Hence
\begin{equation}
\left( _a\Delta^{\alpha}_{h,*}x\right)(nh)=x_0+\left(_0\Delta^{-\beta}_h\tilde{f}\right)(nh+b)\,,
\end{equation}
where $\tilde{f}(nh):=f(nh, x(nh+a))$.
Nextly
\begin{equation}\label{wartosc_x(nh+a)}
x(nh+a)=x_a+x_0\left(_0\Delta^{-\alpha}_h 1\right)(nh+a)+\left(_0\Delta^{-\alpha}_hg\right)(nh+a)\,,
\end{equation}
where $g(nh)=\left(_0\Delta^{-\beta}_h\tilde{f}\right)(nh+b)$.

Firstly we prove the formula for the unique solution in linear case, i.e. when  in equation (\ref{uklad:ogolny}): $f(nh, x(nh+a))=A x(nh+a)$, where $A$ is a constant square matrix of degree $n$.

\begin{theorem}
The solution to the system
\begin{subequations}\label{uklad_lin}
\begin{eqnarray}
\left(_a\Delta^{\alpha}_{h,*}x\right)(nh) & = & y(nh+b)\,,\\ \label{uklad3}
\left(_b\Delta^{\beta}_{h,*}y\right)(nh) & = & A x(nh+a) \label{uklad4}
\end{eqnarray}
\end{subequations}
with initial conditions \eqref{warunki}, i.e.  $\left(_a\Delta^{\alpha}_{h,*}x\right)(0)=x_0$ and $x(a)=x_a$,  $x_0, x_a\in \R^n$,  is given by the following:
\begin{equation}\label{formula:LinearSystem}
x(nh+a)=\sum_{k=0}^{+\infty}A^k\left( \varphi_{k,k} x_a+\varphi_{k+1,k}x_0\right)(nh)\,,
\end{equation}
for $n\in\N_0$.
\end{theorem}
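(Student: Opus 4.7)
The plan is to reduce the problem to the implicit equation \eqref{wartosc_x(nh+a)} derived just before the theorem, specialize it to the linear case $\tilde f(nh)=Ax(nh+a)$, and then run a Picard-style iteration whose $j$-th iterate is precisely the $j$-th partial sum of \eqref{formula:LinearSystem}. The key observation that makes the scheme work mechanically is that, by \eqref{fi_def1}, $\varphi_{k,s}(nh)=0$ whenever $n<k$, so the series in \eqref{formula:LinearSystem} is effectively a finite sum for every fixed $n$ and all convergence issues vanish.

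First I would rewrite \eqref{wartosc_x(nh+a)} in the linear case using Remark \ref{rem}(a),(b) as
\begin{equation*}
x(nh+a)=\varphi_{0,0}(nh)\,x_a+\varphi_{1,0}(nh)\,x_0+\Bigl({}_0\Delta^{-\alpha}_h\bigl({}_0\Delta^{-\beta}_h(A\,x(\cdot+a))\bigr)(\cdot+b)\Bigr)(nh+a),
\end{equation*}
which is a fixed-point equation. Define $u^{(0)}(nh):=\varphi_{0,0}(nh)\,x_a+\varphi_{1,0}(nh)\,x_0$ and
\begin{equation*}
u^{(j+1)}(nh):=u^{(0)}(nh)+A\Bigl({}_0\Delta^{-\alpha}_h\bigl({}_0\Delta^{-\beta}_h u^{(j)}\bigr)(\cdot+b)\Bigr)(nh+a).
\end{equation*}
The claim to prove by induction on $j$ is
\begin{equation*}
u^{(j)}(nh)=\sum_{k=0}^{j}A^{k}\bigl(\varphi_{k,k}(nh)\,x_a+\varphi_{k+1,k}(nh)\,x_0\bigr),
\end{equation*}
which matches \eqref{formula:LinearSystem} once the iteration stabilizes.

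The inductive step is the heart of the argument and is where Proposition \ref{fi} does all the work. Applying \eqref{powerrule_f} sends $\varphi_{k,s}$ to $\varphi_{k,s+1}$ under $\bigl({}_0\Delta^{-\beta}_h\cdot\bigr)(\cdot+b)$, and then \eqref{powerrule} sends $\varphi_{k,s+1}$ to $\varphi_{k+1,s+1}$ under $\bigl({}_0\Delta^{-\alpha}_h\cdot\bigr)(\cdot+a)$. Composed, the operator maps $\varphi_{k,k}\mapsto\varphi_{k+1,k+1}$ and $\varphi_{k+1,k}\mapsto\varphi_{k+2,k+1}$. Therefore the $A^k$-term of $u^{(j)}$ produces precisely the $A^{k+1}$-term required in $u^{(j+1)}$, and combining with $u^{(0)}$ closes the induction.

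Finally, since $\varphi_{k,k}(nh)=0$ for $k>n$ and $\varphi_{k+1,k}(nh)=0$ for $k\ge n$, for each fixed $n$ the sequence $\{u^{(j)}(nh)\}_j$ is eventually constant and equals the right-hand side of \eqref{formula:LinearSystem}; uniqueness is immediate because the implicit equation \eqref{wartosc_x(nh+a)} expresses $x(nh+a)$ as an explicit function of $x_a$, $x_0$, and values of $x$ at strictly smaller indices, so it admits at most one solution. The main obstacle is the bookkeeping of the shifts $a=(\alpha-1)h$ and $b=(\beta-1)h$ in the nested sums: the two identities of Proposition \ref{fi} are stated precisely for the evaluation points $nh+a$ and $nh+b$, which coincide with the shifts produced by \eqref{wartosc_x(nh+a)}, so the compositions line up without any additional shift lemma beyond Proposition \ref{sumy}.
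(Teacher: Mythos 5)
Your proposal is correct and follows essentially the same route as the paper: both reduce the system to the fixed-point summation equation \eqref{wartosc_x(nh+a)} and run a successive-approximation scheme in which Proposition \ref{fi} shifts $\varphi_{k,s}$ to $\varphi_{k+1,s+1}$ at each step. You actually supply more than the paper does, since the paper only computes the iterates $x_1,x_2,x_3$ explicitly and then asserts the limit, whereas you carry out the induction, justify convergence by noting that $\varphi_{k,s}(nh)=0$ for $n<k$ makes the series a finite sum, and address uniqueness via the strictly recursive structure of \eqref{rec}.
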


\begin{proof}
For $n=0$ let us notice, that only $\varphi_{0,0}(0)=1$, for any $k>0$ the next terms are zero, so
in fact we have: $x(0h+a)=x_a$.

For $n>0$ let us define the following sequence
\begin{equation*}
\begin{split}
x_{m+1}(nh+a) & =x_a\varphi_{0,0}(nh)+x_0\varphi_{1,0}(nh) \\
& +\left( _0\Delta^{-\alpha}_h g_m\right)(nh+a)\,, \ \ \ m\in\N_0\,,
\end{split}
\end{equation*}
where $g_m(nh)=\left(_0\Delta^{-\beta}_h\tilde{f}_m\right)(nh+b)$ and $\tilde{f}_m(nh)=A x_m(nh+a)$ with $x_0(nh+a)=x_a$.

We calculate the first step. As $\tilde{f}_0(nh)=A x_0(nh+a)=A x_a
$, then $g_0(nh)=A x_a \left(_0\Delta^{-\beta}1\right)(nh+b)=A x_a\varphi_{0,1}(nh) 
$.
Going further,
\[x_1(nh+a)=x_a\varphi_{0,0}(nh)+x_0\varphi_{1,0}(nh) +\left( _0\Delta^{-\alpha}_h g_0\right)(nh+a)\,.\]
What  could be written as
\[x_1(nh+a)=\left(x_a\varphi_{0,0}+x_0\varphi_{1,0}+A x_a\varphi_{1,1}\right)(nh)\,.\]
and, using Proposition \ref{fi}, we get
\begin{equation*}
\begin{split}
x_2(nh+a)=\left(x_a\varphi_{0,0}+x_0\varphi_{1,0}+A x_a\varphi_{1,1}+A x_0\varphi_{2,1}+A^2 x_a\varphi_{2,2}\right)(nh)\,.
\end{split}
\end{equation*}
 and the next element of the considered sequence has the following form:
\begin{equation*}
\begin{split}
x_3(nh+a)=& \left(x_a\varphi_{0,0}+x_0\varphi_{1,0}+A x_a\varphi_{1,1}+A x_0\varphi_{2,1}+A^2 x_a\varphi_{2,2}\right.\\
&\left.+A^2x_0\varphi_{3,2}+A^3x_a\varphi_{3,3}\right)(nh)\,.
\end{split}
\end{equation*}
Taking $m$ tending to $+\infty$ we get formula \eqref{formula:LinearSystem} as the solution of \eqref{uklad_lin} with initial conditions \eqref{warunki}.
\end{proof}

\subsection{Semilinear sequential systems}
Firstly we state some technical lemma and notations.

\begin{lemma}\label{lemat3}
Let $u:(h\N)_0\rightarrow\R$ and  $\alpha>0$. Let $\left(_0\Delta^{-k\alpha}_h \gamma \right)(nh+k\alpha h)=\gamma_1(nh+k\alpha h)$ and $\widetilde{\gamma}_1(nh):=\gamma_1(nh+k\alpha h)$ for $k\in\N$. Then for $k\in\N$ we get
\begin{equation}\begin{split}\label{eq1}
\left(_0\Delta^{-\alpha}_h\widetilde{\gamma}_1\right)(t)
 =\left(_0\Delta^{-(k+1)\alpha}_h \gamma \right)(t+k\alpha h)\,,
\end{split}\end{equation}
where $t=nh +\alpha h$.
\end{lemma}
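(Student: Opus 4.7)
\medskip

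The plan is to reduce the claim to the composition rule of Proposition \ref{sumy}, modulo a translation of the base point of the $h$-sum. The only real content is that shifting the base from $k\alpha h$ down to $0$ by the change of variable encoded in the definition of $\widetilde\gamma_1$ does not change the value of the $h$-sum, provided the evaluation point is shifted accordingly.

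First I would unfold the definition of $\left(_0\Delta^{-\alpha}_h\widetilde\gamma_1\right)(nh+\alpha h)$ from Definition \ref{sum}, obtaining a sum of the form
\[
\frac{h}{\Gamma(\alpha)}\sum_{j=0}^{n}(nh+\alpha h-\sigma(jh))_h^{(\alpha-1)}\,\widetilde\gamma_1(jh),
\]
and then, using $\widetilde\gamma_1(jh)=\gamma_1(jh+k\alpha h)$, rewrite it as
\[
\frac{h}{\Gamma(\alpha)}\sum_{j=0}^{n}(nh+(k+1)\alpha h-\sigma(jh+k\alpha h))_h^{(\alpha-1)}\,\gamma_1(jh+k\alpha h).
\]
Comparing with the definition of $\bigl(_{k\alpha h}\Delta^{-\alpha}_h\gamma_1\bigr)$ evaluated at $nh+(k+1)\alpha h$, I recognize the right-hand side as exactly
\[
\bigl(_{k\alpha h}\Delta^{-\alpha}_h\gamma_1\bigr)(nh+(k+1)\alpha h).
\]
This translation identity is the first building block.

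Next, I substitute $\gamma_1 = {}_0\Delta^{-k\alpha}_h\gamma$ (by the hypothesis) and apply the composition rule of Proposition \ref{sumy} with $a=0$ and $\beta=k\alpha$, which gives
\[
\bigl(_{k\alpha h}\Delta^{-\alpha}_h\,{}_0\Delta^{-k\alpha}_h\gamma\bigr)(nh+(k+1)\alpha h)=\bigl(_0\Delta^{-(k+1)\alpha}_h\gamma\bigr)(nh+(k+1)\alpha h).
\]
Setting $t=nh+\alpha h$, the evaluation point becomes $t+k\alpha h$, which matches the right-hand side of \eqref{eq1}.

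The only mild obstacle is purely notational: correctly tracking the three shifts $(0,\,\alpha h,\,k\alpha h,\,(k+1)\alpha h)$ that appear, and verifying that $\widetilde\gamma_1$ has domain $(h\N)_0$ so that $_0\Delta^{-\alpha}_h$ is legitimately applicable to it. Once this bookkeeping is clean, the identity follows immediately from Proposition \ref{sumy} and no further calculation is needed.
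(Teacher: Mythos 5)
Your proof is correct and follows essentially the same route as the paper's: unfold the definition of the $h$-sum, use the shift $\widetilde\gamma_1(jh)=\gamma_1(jh+k\alpha h)$ to re-read the sum as ${}_{k\alpha h}\Delta^{-\alpha}_h\gamma_1$ evaluated at the shifted point, and then invoke the composition rule of Proposition \ref{sumy}. The only difference is organizational: the paper carries out this computation for $k=1$ and disposes of general $k$ by saying it follows inductively, whereas you run the same translation argument directly for arbitrary $k$, which is if anything slightly cleaner.
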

\begin{proof}
First let us consider the case $k=1$. Then from Proposition \ref{sumy} we can write
\[\left(_{\alpha h}\Delta^{-\alpha}_h\left(_0\Delta^{-\alpha}_h \gamma \right) \right)(t)=\left(_0\Delta^{-2\alpha}_h \gamma \right)(t),\]
where $t=nh+2\alpha h$, $n\in\N_0$.

Let
$\gamma_1(nh+\alpha h)=\left( _0\Delta^{-\alpha}_h \gamma \right)(nh+\alpha h)$ and $\widetilde{\gamma}_1(nh):=\gamma_1(nh+\alpha h)$.
Then
\begin{eqnarray*}\begin{split}
\left(_0\Delta_h^{-\alpha} \widetilde{\gamma}_1 \right)&(nh+\alpha h)=
 \\ =& \frac{h}{\Gamma(\alpha)}\sum\limits_{r=0}^n \left(nh+\alpha h -\sigma(rh) \right)_h^{(\alpha-1)}\widetilde{\gamma}_1 (rh)
 \\  =& \frac{h}{\Gamma(\alpha)}\sum\limits_{s=\alpha}^{n+\alpha}\left(nh+2\alpha h-\sigma(sh) \right)_h^{(\alpha-1)}\gamma_1(sh)
 \\  =& \left(\mbox{}_{\alpha h}\Delta^{-\alpha}_h\gamma_1\right)(nh+2\alpha h)
 \\  =& \left( _0\Delta^{-2\alpha}_h \gamma\right)(nh+2\alpha h)\,.
\end{split}\end{eqnarray*}
The equation (\ref{eq1}) for $k>1$ follows inductively.
\end{proof}

Note that
\begin{equation*}\begin{split}\label{kdelta}
\left(_0\Delta^{-k\alpha}_h \gamma\right)&(nh+k\alpha h)\\
 & =\frac{h}{\Gamma(k\alpha)}\sum_{r=0}^{n}\left(nh+k\alpha h -\sigma(rh) \right)_h^{(k\alpha -1)}\gamma(rh)\,.
\end{split}\end{equation*}
Similarly to the procedure presented in the proof of Lemma \ref{lemat3} we can prove that for $k, s\in \N_0$ and $\alpha>0, \beta>0$:

\begin{multline}\label{odwl}
   \left(_0\Delta_h^{-k\alpha -s\beta}\gamma\right)(nh+k\alpha h+s\beta h)
\\  =\frac{h}{\Gamma(k\alpha+s\beta)}\sum_{r=0}^{n}\left(\left(n+k\alpha+s\beta - r-1\right)h\right)_h^{(k\alpha+s\beta -1)}\gamma(rh)
\\  =h^{k\alpha+s\beta}\sum_{r=0}^{n}\frac{\Gamma(n-r+k\alpha+s\beta)}{\Gamma(k\alpha+s\beta)\Gamma(n-r+1)}\gamma(rh)
\\  =\sum_{r=0}^{n}\binom{n-r+k\alpha+s\beta-1}{n-r}h^{k\alpha+s\beta}\gamma(rh) \,.
\end{multline}

Taking $\mu=k\alpha+s\beta$ and
using formula 
 \eqref{fi_def2} we can write \eqref{odwl} shortly in the following way
\begin{equation}\label{formula:function:nonlinear:part}
   \left(_0\Delta_h^{-\mu
   }\gamma\right)(nh+\mu h)=\sum_{r=0}^n\widetilde{\varphi}_{k,s}(nh-rh)\gamma(rh)\,.
\end{equation}

Moreover,
we can also write direct formula for values $\left(_0\Delta^{-\alpha}_hg\right)(nh+a)$ given in \eqref{wartosc_x(nh+a)} for nonlinear problem.  In fact using Definition \ref{sum} of fractional summation, formula (\ref{fi_def2}) of functions $\widetilde{\varphi}_{k,s}$ and Proposition \ref{sumy} we write \eqref{formula:function:nonlinear:part} as follows:
\begin{equation*}
x(nh+a)=x_a+x_0\left(_0\Delta^{-\alpha}_h 1\right)(nh+a)+\sum_{r=0}^{n}\widetilde{\varphi}_{1,1}(nh-\sigma(rh))f(rh,x(rh+a))\,.
\end{equation*}
Using the power rule formula for $\mu=0$ and by Remark \ref{rem} we can write the recursive formula for the solution to nonlinear problem given by
equations (\ref{uklad:ogolny})  and conditions (\ref{warunki}):
\begin{equation}\label{rec}
x(nh+a)=x_a+x_0\varphi_{1,0}(nh)+\sum_{r=0}^{n}\widetilde{\varphi}_{1,1}(nh-\sigma(rh))f(rh,x(rh+a))\,.
\end{equation}
The given formula (\ref{rec}) also works for $n=0$ as $\widetilde{\varphi}_{1,1}(-h)=0$.
Then $x(0h+a)=x_a$.
We can  also check for example the next step:
\begin{equation*}
x(h+a)=x_a+x_0\varphi_{1,0}(h)+\widetilde{\varphi}_{1,1}(0h)f(0,x(a))=x_a+x_0h^{\alpha}+h^{\alpha+\beta}f(0,x(a))\,.
\end{equation*}
For special semilinear case when $f(nh,x(nh+a))=Ax(nh+a)+\gamma(nh)$ we have $f(0,x(a))=Ax(a)+\gamma(0)$. Then
\begin{equation*}
x(h+a)=\left(I+h^{\alpha+\beta}A\right)x_a+h^{\alpha}x_0+h^{\alpha+\beta}\gamma(0)\,.
\end{equation*}

\begin{theorem}
The solution to the system
\begin{subequations}\label{uklad_lin2}
\begin{eqnarray}
\left(_a\Delta^{\alpha}_{h,*}x\right)(nh) & = & y(nh+b)\,,\\ \label{uklad5}
\left(_b\Delta^{\beta}_{h,*}y\right)(nh) & = & A x(nh+a)+\gamma(nh) \label{uklad6}
\end{eqnarray}
\end{subequations}
with initial conditions \eqref{warunki}, i.e.  $\left(_a\Delta^{\alpha}_{h,*}x\right)(0)=x_0$ and $x(a)=x_a$,  $x_0, x_a\in \R^n$ 
is given by 
\begin{multline}\label{form}
x(nh+a)  =\sum_{k=0}^{+\infty}A^k\left( \varphi_{k,k} x_a+\varphi_{k+1,k}x_0\right)(nh) \\  +\sum_{r=0}^{n}\left( \sum_{k=0}^{\infty} A^k\widetilde{\varphi}_{k+1,k+1}(nh-\sigma(rh))\right)\gamma(rh)\,,
\end{multline}
for $n\in \N_0$.
\end{theorem}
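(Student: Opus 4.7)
The plan is to take equation \eqref{rec} as the starting point. Substituting $f(nh,x) = Ax+\gamma(nh)$ turns that fixed-point relation into the linear Volterra summation equation
\[
x(nh+a) = x_a + x_0\varphi_{1,0}(nh) + \sum_{r=0}^{n}\widetilde{\varphi}_{1,1}\bigl(nh-\sigma(rh)\bigr)\bigl[A\,x(rh+a) + \gamma(rh)\bigr].
\]
Since both $_a\Delta^\alpha_{h,*}$ and $_b\Delta^\beta_{h,*}$ are linear, I would invoke superposition and write $x = \bar x + \hat x$, where $\bar x$ solves the homogeneous problem \eqref{uklad_lin} with the prescribed initial data $x_a,x_0$ and $\hat x$ solves the forced problem \eqref{uklad_lin2} with $x_a=0$, $x_0=0$ and the same $\gamma$. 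The previous theorem gives immediately
\[
\bar x(nh+a) = \sum_{k=0}^{\infty} A^k\bigl(\varphi_{k,k}\,x_a + \varphi_{k+1,k}\,x_0\bigr)(nh),
\]
which is the first series in \eqref{form}.

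For $\hat x$ I would mimic the Picard scheme from the proof of the linear theorem: set $\hat x_0\equiv 0$ and iterate
\[
\hat x_{m+1}(nh+a) = \sum_{r=0}^{n}\widetilde{\varphi}_{1,1}\bigl(nh-\sigma(rh)\bigr)\bigl[A\,\hat x_m(rh+a) + \gamma(rh)\bigr].
\]
The first iterate is $\hat x_1(nh+a)=\sum_{r=0}^n\widetilde{\varphi}_{1,1}(nh-\sigma(rh))\gamma(rh)$; the central claim, proved by induction on $m$, is
\[
\hat x_{m+1}(nh+a) = \sum_{r=0}^{n}\sum_{k=0}^{m} A^k\,\widetilde{\varphi}_{k+1,k+1}\bigl(nh-\sigma(rh)\bigr)\gamma(rh).
\]
Letting $m\to\infty$ produces the second series in \eqref{form}, and adding it to $\bar x$ yields the formula.

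The main obstacle is the semigroup-type composition identity that drives the induction step, namely
\[
\sum_{s=r}^{n}\widetilde{\varphi}_{1,1}\bigl(nh-\sigma(sh)\bigr)\,\widetilde{\varphi}_{k,k}\bigl(sh-\sigma(rh)\bigr) = \widetilde{\varphi}_{k+1,k+1}\bigl(nh-\sigma(rh)\bigr).
\]
I would prove it by recognizing that the operator $\gamma\mapsto\sum_r\widetilde{\varphi}_{1,1}(\cdot-\sigma(rh))\gamma(rh)$ is precisely the composition $_0\Delta_h^{-\alpha}\circ{}_0\Delta_h^{-\beta}$ used to pass from \eqref{wartosc_x(nh+a)} to \eqref{rec}; iterating Proposition \ref{sumy} collapses its $(k+1)$-fold composition to $_0\Delta_h^{-(k+1)(\alpha+\beta)}$, and formula \eqref{formula:function:nonlinear:part} with $\mu=(k+1)(\alpha+\beta)$ identifies the kernel of that single sum as $\widetilde{\varphi}_{k+1,k+1}$. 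The shift from $rh$ to $\sigma(rh)$ and the adjustment of summation bounds have to be tracked carefully, in the spirit of the bookkeeping already carried out in the proof of Lemma \ref{lemat3}.
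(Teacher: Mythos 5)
Your overall strategy---superposition into the homogeneous solution plus a forced part, Picard iteration on \eqref{rec}, and collapsing the iterated kernels via Proposition \ref{sumy}---is the same engine the paper uses: its proof says only ``based on the proof for linear case'' and then asserts that the $\gamma$-terms accumulate as $\left(_0\Delta_h^{-(k+1)(\alpha+\beta)}\gamma\right)\bigl(nh-h+(k+1)(\alpha+\beta)h\bigr)$ before converting them with \eqref{odwl} and \eqref{formula:function:nonlinear:part}. So you have located the right mechanism. The difficulty is that the composition identity you single out as the crux is not an identity. Writing $nh-\sigma(sh)=(n-s-1)h$ and $sh-\sigma(rh)=(s-r-1)h$ and substituting $j=s-r-1$, the left-hand side equals
\begin{equation*}
\sum_{j=0}^{n-r-2}\widetilde{\varphi}_{1,1}\bigl((n-r-2-j)h\bigr)\,\widetilde{\varphi}_{k,k}(jh)=\widetilde{\varphi}_{k+1,k+1}\bigl((n-r-2)h\bigr)
\end{equation*}
by the unshifted convolution property (Proposition \ref{sumy} together with \eqref{formula:function:nonlinear:part}, equivalently Vandermonde's identity), whereas the right-hand side is $\widetilde{\varphi}_{k+1,k+1}\bigl((n-r-1)h\bigr)$. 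The two differ by one step of $h$. The source of the discrepancy is exactly the bookkeeping you postpone: the operator $\gamma\mapsto\sum_r\widetilde{\varphi}_{1,1}(\cdot-\sigma(rh))\gamma(rh)$ is $_0\Delta_h^{-(\alpha+\beta)}$ evaluated at a point already pulled back by $h$, and under composition these backward shifts accumulate rather than staying fixed at one; Proposition \ref{sumy} composes the sums at matched base points and cannot absorb the extra shifts for you.

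Consequently the induction does not close on the claimed formula: iterating \eqref{rec} faithfully produces the kernels $\widetilde{\varphi}_{k+1,k+1}\bigl(nh-(k+1)h-rh\bigr)$ at order $k$, not $\widetilde{\varphi}_{k+1,k+1}\bigl(nh-\sigma(rh)\bigr)$, so the limit is not the second series in \eqref{form}. The shifts are not cosmetic: taking $h=1$, $\alpha=\beta=1$, $A=0$, $\gamma\equiv c$, $x_a=x_0=0$, the system reduces to $\Delta x=y$, $\Delta y=c$ with zero data, whose solution is $x(n)=c\binom{n}{2}$, while the single-shift kernel in \eqref{form} yields $c\binom{n+1}{2}$. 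To be fair, the paper's own one-line proof silently asserts the single-shift evaluation point and never verifies it, so your plan reproduces the paper's argument together with its unexamined step; but as a proof of the stated formula it fails precisely at the identity you flag as the main obstacle, and no rearrangement of Proposition \ref{sumy} will establish that identity, because it is false.
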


\begin{proof}
For $n=0$ let us notice, that only $\varphi_{0,0}(0)=1$, for any $k>0$ the next terms are zero, so
in fact we have: $x(0h+a)=x_a$.

For $n>0$ based on the proof for linear case   we can write the solution formula as follows:
\begin{equation*}
\begin{split}
x(nh+a) & =\sum_{k=0}^{+\infty}A^k\left( \varphi_{k,k} x_a+\varphi_{k+1,k}x_0\right)(nh) \\ & + \sum_{k=0}^{+\infty} A^k \left(_0\Delta_h^{(k+1)(\alpha+\beta)}\gamma\right)(nh-h+(k+1)(\alpha+\beta)h)\,.
\end{split}
\end{equation*}
Then taking into account the formulas (\ref{odwl}) and \eqref{formula:function:nonlinear:part} we get the form (\ref{form}) as the solution of \eqref{uklad_lin2} with initial
conditions \eqref{warunki}.
\end{proof}

\section{Positivity
}\label{sec4}

Based on \cite{Kaczorek2008,Kaczorek2009} we consider the following definitions.
\begin{definition}
The fractional system \eqref{uklad:ogolny} is called {\em positive} fractional system if and only if $x(nh+a)\in\R^n_\geq$ for any initial conditions $x_a,\ x_0\in\R^n_\geq$.
\end{definition}

Let  $x_0,\ x_a\in \R^n_\geq$ and $\mathcal{X}_n:=\{X:\N\rightarrow\R^n\}$.
The operator $T_{x_0,x_a}:\mathcal{X}_n \rightarrow \mathcal{X}_n$ is defined as
\begin{equation*}\label{operator}
\begin{split}
\left(T_{x_0,x_a}X\right)(n)&:=\sum\limits_{k=0}^{+\infty}A^k\left( \varphi_{k,k} x_a+\varphi_{k+1,k}x_0\right)(nh)\\
&+\sum_{r=0}^{n}\left( \sum_{k=0}^{\infty} A^k\widetilde{\varphi}_{k+1,k+1}(nh-\sigma(rh))\right)\gamma(rh)\,,
\end{split}
\end{equation*}
where  $X\in\mathcal{X}_n$, $X(n)=x(nh+a)$ and functions $\varphi_{k,s},\widetilde{\varphi}_{k,s}$ are given by \eqref{fi_def1} and \eqref{fi_def2}, respectively.

The proof of the following proposition is analogous to similar fact in \cite{Kaczorek2008}.

\begin{proposition}\label{prop:1}
Let $x_0,\ x_a\in \R^n_\geq$ and the right hand side of system \eqref{uklad:ogolny}, $f$~fulfils $f(nh, x(nh+a))\geq0$. Then
$T_{x_0,x_a}\left(\R^n_\geq\right)\subset \R^n_\geq$.
\end{proposition}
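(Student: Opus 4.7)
The plan is to show termwise that every building block of $(T_{x_0,x_a}X)(n)$ is componentwise nonnegative and then invoke closure of $\R^n_\geq$ under addition, following the pattern of \cite{Kaczorek2008}.

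First I would translate the hypothesis on $f$ into componentwise conditions on the data appearing in the operator. In the semilinear setting where $T_{x_0,x_a}$ is defined, the right-hand side takes the form $f(nh,x)=Ax+\gamma(nh)$, so $f(nh,x)\in\R^n_\geq$ for every $x\in\R^n_\geq$ forces, by testing on $x=0$ and on the standard basis vectors, the matrix $A$ to have nonnegative entries and $\gamma(rh)\in\R^n_\geq$ for every $r\in\N_0$. In particular, $A^k$ has nonnegative entries for each $k\in\N_0$ and thus maps $\R^n_\geq$ into itself.

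Second I would verify that the scalar weights $\varphi_{k,k}(nh)$, $\varphi_{k+1,k}(nh)$ and $\widetilde{\varphi}_{k+1,k+1}(nh-\sigma(rh))$ (the latter for $0\leq r\leq n$) are nonnegative. By Remark \ref{rem}(c) and \eqref{fi_def2}, each such weight is either zero by the pole convention outside its support, or a quotient of Gamma values at positive arguments times $h^{\mu}$ with $\mu\geq 0$; since $\alpha,\beta\in(0,1]$ and $h>0$, all these values are nonnegative. The shift $nh-\sigma(rh)=(n-r-1)h$ lies in $(h\N)_0\cup\{-h\}$, and at the endpoint $r=n$ the weight vanishes, consistent with the convention.

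Finally I would combine the pieces. Each summand $A^k(\varphi_{k,k}(nh)\,x_a+\varphi_{k+1,k}(nh)\,x_0)$ is a nonnegative matrix applied to a nonnegative vector and hence lies in $\R^n_\geq$, and similarly $A^k\widetilde{\varphi}_{k+1,k+1}(nh-\sigma(rh))\gamma(rh)\in\R^n_\geq$ for each admissible $k,r$. Since $\R^n_\geq$ is a closed convex cone, the finite sum over $r\in\{0,\dots,n\}$ and the series over $k$ remain in $\R^n_\geq$, giving $(T_{x_0,x_a}X)(n)\in\R^n_\geq$ for every $n$. The main obstacle is really the first step, which extracts componentwise positivity of $A$ and $\gamma$ from the abstract hypothesis $f\geq 0$; once that reduction is in place, the conclusion is a routine termwise positivity argument in a closed cone.
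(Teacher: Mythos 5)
Your overall strategy is exactly the one the paper intends: the paper gives no proof at all here, only the remark that the argument is analogous to \cite{Kaczorek2008}, and that analogous argument is precisely the termwise one you describe --- nonnegativity of the scalar weights $\varphi_{k,k}$, $\varphi_{k+1,k}$, $\widetilde{\varphi}_{k+1,k+1}$ via Remark \ref{rem}(c) and \eqref{fi_def2} (all Gamma factors evaluated at positive arguments, zero outside the support by the pole convention), nonnegativity of $A^k$ and of the forcing term, and then closure of the cone $\R^n_\geq$ under addition and limits. Your observation that $\widetilde{\varphi}_{k+1,k+1}(nh-\sigma(rh))$ vanishes at $r=n$ is correct and worth keeping. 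Two remarks: for fixed $n$ the first series is in fact a finite sum, since $\varphi_{k,k}(nh)=0$ for $k>n$ and $\varphi_{k+1,k}(nh)=0$ for $k\geq n$ by \eqref{fi_def1}, so no limiting argument is needed there; only the series in $\widetilde{\varphi}_{k+1,k+1}$ is genuinely infinite, and its convergence is a well-definedness issue for $T_{x_0,x_a}$ that the paper itself leaves untreated, so appealing to closedness of the cone for its sum is the right move.

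The one genuine soft spot is your first step. Testing $f(nh,x)=Ax+\gamma(nh)\geq 0$ at $x=0$ does give $\gamma(nh)\in\R^n_\geq$, but testing at the standard basis vectors $e_j$ only gives $A_{ij}\geq -\gamma_i(nh)$, not $A_{ij}\geq 0$. To extract entrywise nonnegativity of $A$ you must test at $x=te_j$ and let $t\to+\infty$ (equivalently, $A_{ij}=\lim_{t\to\infty}t^{-1}f_i(nh,te_j)\geq 0$). This also presupposes reading the hypothesis ``$f(nh,x(nh+a))\geq 0$'' as holding for all $x\in\R^n_\geq$ rather than only along a particular trajectory; that reading is the only one under which the conclusion about $A^k$ can be drawn, and it is consistent with the positivity framework of \cite{Kaczorek2008}, but you should state it explicitly since the proposition is phrased for the general system \eqref{uklad:ogolny} while the operator $T_{x_0,x_a}$ is written in the semilinear form. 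With that repair the proof is complete.
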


\begin{definition}
The fractional system \eqref{uklad:ogolny} is called {\em locally positive fractional system} if and only if for any initial conditions $x_a,\ x_0\in\R^n_\geq=\{x\in\mathbb{R}^n: x_i\geq 0, 1\leq i\leq n\}$ there is $\tau\geq1$  such that $x(nh+a)\in\R^n_\geq$ for $n\in [0,\tau]$.
\end{definition}

\begin{proposition}
If the matrix $I+Ah^{\alpha+\beta}$ is positive and $\gamma(nh)\geq0$ for $n\in\N_0$, then for any $x_0,\ x_a\in\R^n_{\geq}$ the fractional system \eqref{uklad_lin2} is locally positive.

\end{proposition}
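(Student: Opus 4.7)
The plan is to directly verify the definition of local positivity by producing a value of $\tau$ (namely $\tau=1$) for which nonnegativity can be checked by hand, using the explicit short-time formula for $x$ that was already derived just above the statement of Theorem for system \eqref{uklad_lin2}. The semilinear recursion \eqref{rec} specialised to $f(nh,x(nh+a))=Ax(nh+a)+\gamma(nh)$ yields, for the first two time steps, $x(0h+a)=x_a$ and
\begin{equation*}
x(h+a)=\left(I+h^{\alpha+\beta}A\right)x_a+h^{\alpha}x_0+h^{\alpha+\beta}\gamma(0),
\end{equation*}
which is precisely the identity displayed in the paragraph preceding the theorem for system \eqref{uklad_lin2}. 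So I only need to check nonnegativity at $n=0$ and $n=1$.

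First I would observe that $x(0h+a)=x_a\in\R^n_{\geq}$ by assumption. Next, by hypothesis the matrix $I+Ah^{\alpha+\beta}$ is positive (has nonnegative entries), so acting on $x_a\in\R^n_{\geq}$ it produces a vector in $\R^n_{\geq}$; the coefficient $h^{\alpha}$ is positive and $x_0\in\R^n_{\geq}$, so $h^{\alpha}x_0\in\R^n_{\geq}$; finally $h^{\alpha+\beta}>0$ and $\gamma(0)\geq 0$ by assumption, so $h^{\alpha+\beta}\gamma(0)\in\R^n_{\geq}$. Since $\R^n_{\geq}$ is closed under addition, $x(h+a)\in\R^n_{\geq}$.

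Hence $\tau=1$ satisfies the definition: $x(nh+a)\in\R^n_{\geq}$ for all $n\in[0,\tau]$, which is exactly local positivity of \eqref{uklad_lin2}. The only mildly delicate point — and the one I would double-check — is that ``positive matrix'' here is being used in the entrywise sense (consistent with the definition of $\R^n_{\geq}$ in Section \ref{sec:2}), so that $(I+h^{\alpha+\beta}A)$ acting on $\R^n_{\geq}$ stays in $\R^n_{\geq}$; any other reading of positivity (e.g.\ positive definiteness) would make the argument fail, but given the conventions of the paper this is immediate. No infinite-series manipulation or convergence argument from \eqref{form} is needed, because local positivity only demands a finite-$n$ window.
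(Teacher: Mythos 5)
Your proof is correct and follows essentially the same route as the paper: both verify local positivity with $\tau=1$ by computing $x(h+a)=\left(I+h^{\alpha+\beta}A\right)x_a+h^{\alpha}x_0+h^{\alpha+\beta}\gamma(0)$ explicitly and checking entrywise nonnegativity. If anything, your version is slightly more careful, since the paper's own displayed computation omits the $h^{\alpha+\beta}\gamma(0)$ term (it evaluates only the homogeneous part of the series), whereas you include it and use the hypothesis $\gamma(nh)\geq 0$ where it is actually needed.
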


\begin{proof}
Since $x(h+a)=\sum_{k=0}^1 A^k(\varphi_{k,k}x_a+\varphi_{k+1,k}x_0)(h)=x_a+\varphi_{1,0}(h)x_0+A\varphi_{1,1}(h)x_a=x_a+h^{\alpha}x_0+ Ah^{\alpha+\beta}x_a$, then with our assumptions, we get local positivity.

\end{proof}

\section*{Acknowledgments}

{ \small The work was supported by Bia{\l}ystok University of Technology grant G/WM/3/12.


\end{document}